\tikzset{emp/.style={double distance = 0.3ex}}
\tikzset{M edge/.style={line width=1.3pt,double distance=1.1pt}}
\tikzset{F1 edge/.style={line width=1.3,color=red,->}}
\tikzset{F2 edge/.style={line width=1.3,color=blue,->}}
\tikzset{E edge/.style={line width=1.3,color=black,-}}
\tikzset{squared black vertex/.style={draw,minimum size=2mm,inner sep=0pt,outer sep=3pt,fill=black, color=black}}
\tikzset{red vertex/.style={circle,draw,minimum size=2mm,inner sep=0pt,outer sep=2pt,fill=red, color=red}}
\tikzset{blue vertex/.style={circle,draw,minimum size=2mm,inner sep=0pt,outer sep=2pt,fill=blue, color=blue}}
\tikzset{black vertex/.style={circle,draw,minimum size=2mm,inner sep=0pt,outer sep=2pt,fill=black, color=black}}
\tikzset{small black vertex/.style={circle,draw,minimum size=1.2mm,inner sep=0pt,outer sep=1.2pt,fill=black, color=black}}
\tikzset{small white vertex/.style={circle,draw,minimum size=1.2mm,inner sep=0pt,outer sep=1.2pt,color=black,fill=white}}
\tikzset{square vertex/.style={draw,minimum size=1.2mm,inner sep=0pt,outer sep=1.2pt,fill=black, color=red}}
\tikzset{white vertex/.style={circle,draw,minimum size=2mm,inner sep=0pt,outer sep=3pt,color=black,fill=white}}
\tikzset{fatpath/.style={line width=9pt,rounded corners=.1mm}}
\tikzstyle{edge}=[line width=1.3]
\tikzstyle{color1}=[color=blue] 
\tikzstyle{color2}=[color=red]
\tikzstyle{color3}=[color=green] 
\tikzstyle{color4}=[fill=yellow]
\tikzstyle{color5}=[ dashed] 
\tikzstyle{backcolor1}=[color=gray!55!white] 
\tikzstyle{backcolor2}=[color=blue!35!white] 
\newtheorem{theorem}             {Theorem}
\newtheorem{lemma}     	[theorem] {Lemma}        
\newtheorem{conjecture}	[theorem] {Conjecture}
\newtheorem{proposition}[theorem] {Proposition}
\newtheorem{claim}	[theorem] {Claim}  
\newcommand{\id}{\iota}
\newcommand{\eps}{\varepsilon}
\newcommand{\calF}{\mathcal{F}}
\title{Independent dominating sets \\ in planar triangulations} 
\thanks{This research has been partially supported by Coordenação de Aperfeiçoamento
  de Pessoal de Nível Superior -- Brazil -- CAPES -- Finance Code 001. 
  F.~Botler is supported by CNPq (Proc.~423395/2018-1 and~304315/2022-2), 
  FAPERJ (Proc.~211.305/2019 and~201.334/2022) and 
  CAPES-PRINT (Proc.~88887.695773/2022-00). 
  C.G. Fernandes was partially supported by CNPq (Proc.~308116/2016-0, 423833/2018-9, and~310979/2020-0) and FAPESP (Proc.~2019/13364-7).
  FAPERJ and FAPESP are, respectively, Research Foundations of Rio de Janeiro
  and S\~ao Paulo, Brazil.  CNPq is the National Council for Scientific and
  Technological Development of Brazil.
  J. Gutiérrez was partially supported by
by Movilizaciones para Investigación AmSud, PLANarity and distance IN Graph theory (E070-2021-01-Nro.6997) and Fondo Semilla UTEC 871075-2022.  
  }
\author{Fábio Botler}
\address{\vspace{-4mm}Programa de Engenharia de Sistemas e Computa\c c\~ao \\
Instituto Alberto Luiz Coimbra de P\'os-Gradua\c c\~ao e Pesquisa em Engenharia \\
Universidade Federal do Rio de Janeiro, Brazil}
\email{fbotler@cos.ufrj.br}
\author{Cristina G. Fernandes}
\address{\vspace{-4mm}Departamento de Ci\^encia da Computa\c c\~ao \\
Universidade de S\~ao Paulo, Brazil}
\email{cris@ime.usp.br}
\author{Juan Gutiérrez} 
\address{\vspace{-4mm}Departamento de Ciencia de la Computación\\ 
Universidad de Ingeniería y Tecnología (UTEC), Perú}
\email{jgutierreza@utec.edu.pe}
\begin{document}
\normalem

\maketitle

\begin{abstract}
  In 1996, Matheson and Tarjan proved that every near planar triangulation on \(n\) vertices contains 
  a dominating set 
  of size at most \(n/3\), and conjectured that this upper bound can be reduced to \(n/4\) 
  for planar triangulations when $n$ is sufficiently large.
  In this paper, we consider the analogous problem for independent dominating sets:
  What is the minimum \(\eps\) for which
  every near planar triangulation on \(n\) vertices contains an independent dominating set of size at most \(\eps n\)?
  We prove that \(2/7 \leq \eps \leq 5/12\).
  Moreover, this upper bound can be improved to $3/8$ for planar triangulations, and to \(1/3\) for planar triangulations with minimum degree 5.
\end{abstract}

\section{Introduction}

Let \(S\) be a set of vertices of a graph \(G\).
We say that \(S\) is \emph{dominating}  
if each vertex of~$G$ is in~$S$ or is a neighbor of some vertex in~$S$;
and that \(S\) is \emph{independent} if no two vertices in~$S$ are adjacent.
In particular, any maximal independent set in~$G$ is dominating.  
We denote by~$\gamma(G)$ (resp. \(\id(G)\)) the cardinality of a minimum dominating (resp. independent dominating) set of~$G$.
Note that~$\gamma(G) \leq \id(G)$. 
Such parameters are known as the \emph{domination number} and the \emph{independent domination number}
of \(G\), respectively, and their calculations are known to be NP-hard problems 
even on planar bipartite graphs with maximum degree \(3\)~\cite[Corollary~3]{ZvervichZ1995}.
Therefore, it is natural to explore such parameters in special classes of graphs, 
or to look for upper and lower bounds on them.

In this paper, we focus on \emph{planar} graphs,
i.e., graphs that can be drawn in the plane so that intersections of edges happen only at their ends.
By a \emph{plane} graph we mean a planar graph together with a fixed planar drawing of it.
For general terminology on planar graphs we refer to the book of Diestel~\cite{Diestel2010}.
In particular, a \emph{planar triangulation} is a plane graph in which each face is bounded by a triangle; 
and a \emph{triangulated disk} or a \emph{near (planar) triangulation} is a 2-connected plane graph 
in which each face, except possibly its outer face, is bounded by a triangle.
In 1996, Matheson and Tarjan~\cite{MathesonT1996} proved that every near triangulation~\(G\) on~\(n\) vertices satisfies \(\gamma(G) \leq n/3\), and posed the following conjecture.

\begin{conjecture}[Matheson--Tarjan, 1996]\label{conj:mathesontarjan}
  For every sufficiently large planar triangulation~$G$ on~$n$ vertices,
  we have ${\gamma(G) \leq n/4}$.
\end{conjecture}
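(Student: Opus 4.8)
The final displayed statement is the Matheson--Tarjan conjecture itself, which to the best of my knowledge is still open, so what follows is a plan of attack rather than a claimed proof. The natural benchmark is their own bound $\gamma(G)\le n/3$, obtained by partitioning $V(G)$ into three dominating sets and keeping the smallest. The cleanest way to reach $n/4$ would be to partition $V(G)$ into four pairwise-disjoint dominating sets, but this is provably too strong: the domatic number of a graph is at most $\delta+1$, and a triangulation can have many vertices of degree $3$, each of whose closed neighborhoods $N[v]$ has only four vertices. Such a vertex both caps the domatic number at $4$ and forces $N[v]$ to contain exactly one vertex of each class, so even two degree-$3$ vertices sharing neighbors can make a partition into four dominating sets impossible. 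I would therefore abandon the partition approach and construct a single dominating set $S$ with $|S|\le n/4$ directly.

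The plan is to combine a structured greedy construction with a discharging argument. First I would build $S$ by processing $G$ along a breadth-first layering rooted at the outer triangle, as in Matheson--Tarjan, so that the pieces of $G$ seen at each stage are near-triangulations to which an inductive hypothesis applies. Then I would assign each vertex an initial charge of $1$ and let every chosen vertex of $S$ absorb the charge of the vertices it is responsible for dominating; the target is to show that, after redistribution, each vertex of $S$ collects charge at least $4$, which yields $|S|\le n/4$. The arithmetic is not hopeless: since $\sum_{v}\deg(v)=2|E(G)|=2(3n-6)$, the average degree is just under $6$, so a typical chosen vertex dominates enough vertices to pay for itself four times over, and the whole game is to handle the places where this fails.

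The main obstacle is exactly that these failures are unavoidable and the conjectured bound is essentially tight, so no discharging scheme can afford slack. Vertices of degree $3$ and $4$ create local congestion in which several would-be dominators compete for the same small region, and there are stacked (Apollonian-type) triangulations and families constructed by \v{S}pacapan whose domination number is close to $n/4$; any valid argument must therefore match these extremal examples almost exactly. A plain average-degree estimate only reproduces something near $n/3$, and the best known unconditional bound is about $17n/53$, so the crux is a genuinely tight reducibility analysis: one must show that a minimal counterexample contains a bounded local configuration that can be excised, dominated with fewer than $n/4$ vertices, and reassembled. I expect this reducibility step, controlling how low-degree vertices cluster, to be precisely where the argument stalls, which is why the gap between $n/4$ and the known upper bounds has resisted closing.
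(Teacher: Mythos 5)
You have correctly identified that this statement is the Matheson--Tarjan conjecture, which the paper itself does not prove and which remains open: the paper only records it as Conjecture~\ref{conj:mathesontarjan}, notes it would be best possible (via disjoint copies of $K_4$ triangulated so that $\gamma(G)=n/4$), and cites \v{S}pacapan's bound $\gamma(G)\leq 17n/53$ as the best known progress. There is therefore no proof in the paper to compare against, and your decision not to claim one is the right call; your observations --- that the domatic-number obstruction (degree-$3$ vertices cap the domatic number at $4$ and can rule out a partition into four dominating sets) blocks the naive extension of Matheson and Tarjan's three-dominating-sets argument, and that any discharging scheme must be tight against the extremal $K_4$-chain examples --- are consistent with the state of the art as surveyed in the paper. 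Just be aware that your sketch is a research program, not a partial result: nothing in it yields any bound below $n/3$, so it should not be presented as progress toward the conjecture.
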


Conjecture~\ref{conj:mathesontarjan} is best possible since, for any planar triangulation~\(G\) 
that consists of a triangulation of~\(k\) vertex-disjoint copies of~\(K_4\) embedded so that 
each copy of~\(K_4\) is in the outer face of all other copies of~\(K_4\), 
we have \(\gamma(G) = k = n/4\).
In spite of the efforts of specialists~\cite{KingP2010,LiZSX2016,Plummer2016,Plummer2020},
the best general upper bound found so far for \(\gamma(G)\) is due to \v{S}pacapan~\cite{Spacapan2020},
who proved that $\gamma(G) \leq 17n/53$ 
for every planar triangulation~$G$ on~$n \geq 6$ vertices.
Observe also that the bound of Conjecture~\ref{conj:mathesontarjan}
is not satisfied for near triangulations
\cite[Figure~1]{MathesonT1996}.
Related results for maximal outerplanar graphs have been given
in \cite{Campos2013,Tokunaga2013}.

We are interested in the analogous problem for the independent domination number:
What is the minimum \(\eps\) such that 
\(\id(G) \leq \eps n\) for every near triangulation \(G\) on \(n\) vertices?
In contrast to the domination number, this parameter has not received so much attention on near triangulations.
It is known that $\id (G) < 3n/4$ for any connected planar graph~$G$ on $n\geq 10$ vertices~{\cite[Theorem~6]{GoddardH2020}}; and that $\id (G) \leq~n/2$ whenever $G$ is planar and $\delta(G)\geq 2$
\cite[Theorem~8]{GoddardH2020}.
Also, if $G$ is a 2-connected outerplanar graph
on $n\geq 5$ vertices, then $\id(G)\leq \frac{2n+1}{5}$ \cite[Theorem 1]{Goddard2023}.
For an excellent survey on independent dominating sets,
see~\cite{Goddard2013}.

Now, note that since every Eulerian planar triangulation has chromatic number \(3\),
they contain three disjoint independent dominating sets.
Goddard and Henning \cite[Question~1]{GoddardH2020}
asked whether such three sets exist in any planar triangulation.
In particular, this would imply that $\id (G) \leq n/3$ for every \(n\)-vertex planar triangulation $G$.
We state the later statement as a conjecture.

\begin{conjecture}\label{conj:planartr}
  For every planar triangulation~$G$ on~$n$ vertices, we have~$\id (G) \leq n/3$.
\end{conjecture}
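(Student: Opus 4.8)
The plan is to prove the stronger statement that every planar triangulation $G$ admits three \emph{pairwise disjoint} independent dominating sets $I_1,I_2,I_3$; since these are disjoint, $|I_1|+|I_2|+|I_3|\le n$, so the smallest of them has size at most $n/3$ and witnesses $\id(G)\le n/3$. This is exactly the question of Goddard and Henning cited above. Note one cannot hope to \emph{partition} $V(G)$ into three independent dominating sets: such a partition would be a proper $3$-colouring, which a triangulation admits if and only if it is Eulerian. Thus the sets must be allowed to be disjoint but not covering. The Eulerian case is the one clean case: in a proper $3$-colouring each colour class is independent, and each class is dominating because the link of every vertex is an even cycle using exactly the other two colours, so every vertex sees both of them. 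The task is therefore to produce three disjoint independent dominating sets when $G$ is not Eulerian.

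First I would set up an induction that reduces to $4$-connected triangulations by splitting along separating triangles (recall that a triangulation on at least five vertices is $4$-connected precisely when it has no separating triangle). If $T=xyz$ is a separating triangle, write $G$ as the union of the triangulation $G_{\mathrm{in}}$ inside $T$ (with outer face $T$) and the triangulation $G_{\mathrm{out}}$ outside $T$, which overlap only in $T$. To glue solutions I would strengthen the induction hypothesis to control the three sets on the outer triangle, say: there exist three disjoint independent dominating sets in which $x,y,z$ receive three prescribed distinct indices. Because the only edges shared by $G_{\mathrm{in}}$ and $G_{\mathrm{out}}$ lie inside $T$, matching the indices of $x,y,z$ on the two sides makes each union $I_j=I_j^{\mathrm{in}}\cup I_j^{\mathrm{out}}$ independent, while domination is inherited from each side. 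The only bookkeeping is to ensure that a vertex of $T$ is never placed into different classes on the two sides, and that if it lies in some class on one side it has no neighbour of that class on the other side.

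For the $4$-connected core one has minimum degree at least $4$, Tutte Hamiltonicity, and the machinery of canonical orderings and Schnyder woods available. Here I would try to build the three disjoint sets by processing the vertices in a canonical order and greedily assigning each to one of three \emph{buckets}, maintaining at each step that the buckets are independent and arranging that, by the end, each bucket dominates; a degree-$4$ vertex, whose link is a $4$-cycle, would be handled by a local reduction that contracts the configuration, solves the smaller triangulation, and lifts the solution back. An alternative is a direct discharging argument applied to three greedily chosen disjoint maximal independent sets, charging the vertices that fail to be dominated or that are over-subscribed.

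The hard part, and the reason the full conjecture remains open, is exactly this last step: without a proper $3$-colouring to read off, one must guarantee three pairwise disjoint independent sets that \emph{simultaneously} each dominate all of $V(G)$, and the tension between disjointness (which limits how many vertices each set may grab) and triple domination (which forces each set to reach everywhere) is severe near odd-degree and degree-$4$ vertices. I would expect the argument to go through cleanly under the extra hypothesis $\delta(G)\ge 5$, where the longer links give each vertex enough neighbours to be reached by all three sets, and to become genuinely delicate precisely when many vertices of degree $3$ or $4$ are present.
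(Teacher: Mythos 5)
There is a genuine gap, and in fact a category error: the statement you are trying to prove is stated in the paper as a \emph{conjecture} (it is the assertion extracted from Goddard and Henning's question of whether every planar triangulation contains three pairwise disjoint independent dominating sets), and the paper offers no proof of it. The paper's actual results are strictly weaker: Theorem~\ref{thm:mainID} gives $\id(G)\le 5n/12$ for near triangulations, $\id(G)<3n/8$ for planar triangulations, and $\id(G)\le n/3$ only under the extra hypothesis $\delta(G)=5$, all via the Four-Color Theorem, an analysis of the sets $U_i$ of vertices undominated by each color class, and a face-counting lemma --- not via three disjoint independent dominating sets. The only case where the paper obtains the conjecture by the route you propose is the Eulerian one (chromatic number $3$, each color class dominating), which you correctly reproduce; but that observation is already in the paper and does not touch the general case.

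Your proposal itself concedes the decisive gap: after reducing to $4$-connected triangulations, the construction of three disjoint independent dominating sets is described only as things you ``would try'' (canonical orderings, a local reduction at degree-$4$ vertices, or discharging), and you explicitly write that this last step is ``the reason the full conjecture remains open.'' That is not a proof but a research plan. Moreover, even the reduction step is unproven: your strengthened induction hypothesis --- that for any prescribed assignment of three distinct indices to the outer triangle $xyz$ there exist three disjoint independent dominating sets realizing it --- is a substantially stronger statement than the conjecture, and you give no argument that it holds for the $4$-connected core (nor even that it is consistent; forcing all three of $x,y,z$ to \emph{belong} to the sets is a real constraint, and domination of $x,y,z$ and independence across the cut both need care when the inside and outside solutions are glued). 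As it stands, nothing in the proposal establishes $\id(G)\le n/3$ beyond the Eulerian case.
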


Our main contribution is the following theorem.

\begin{theorem}\label{thm:mainID}
For every near triangulation $G$ on $n$ vertices, we have $\id(G) \leq  5n/12$. 
Moreover, if $G$ is a planar triangulation, then $\id(G) < 3n/8$, 
and if $G$ is a planar triangulation with $\delta(G)=5$, then $\id(G) \leq n/3$. 
\end{theorem}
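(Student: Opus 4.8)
The plan is to reduce each bound to the construction of a \emph{small maximal independent set}, since every maximal independent set is an independent dominating set and, conversely, $\id(G)$ is exactly the minimum size of a maximal independent set. Thus throughout I would fix a convenient dominating structure, extract from it an independent ``core'', and then greedily extend the core to a maximal independent set; the size of the resulting set is the size of the core plus an \emph{augmentation term}, and the entire difficulty lies in bounding this augmentation. As a sanity baseline, a proper $4$-colouring $V = C_1 \cup \cdots \cup C_4$ of the (planar) triangulation already shows that each union $C_a \cup C_b$ dominates $G$ and that $C_a$ extends to a maximal independent set using only vertices that miss colour $a$; a short averaging argument over the four colours recovers the known bound $\id(G) \le n/2$. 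Beating $n/2$ forces one to use the triangulation structure rather than mere planarity, so I expect the real work to be a discharging argument.

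For the near-triangulation bound $\id(G) \le 5n/12$ I would start from a \emph{minimal} dominating set $D$ with $|D| \le n/3$, which exists by the theorem of Matheson and Tarjan; minimality guarantees that every vertex of $D$ keeps a private neighbour. First I would make $D$ independent by local surgery: process the components of $G[D]$ one at a time, replacing each component $K$ by a maximal independent subset $I_K$ of $K$ and then re-dominating those external vertices whose only dominators lay in $K \setminus I_K$. This produces an independent dominating set of size $|D|$ plus a re-domination cost, and the goal is to charge that cost, at a rate of at most $n/12$ overall, to local planar configurations. The bound $|E(G)| \le 3n-6$ together with the private-neighbour structure should make each unit of cost correspond to a bounded region of the triangulation, which is exactly the setting for a discharging estimate.

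The two refinements should come from a sharper local analysis. For a planar triangulation the decisive extra feature is that the outer face is also a triangle, so the dominating partition carries no ``boundary defect'' and the charging becomes tight enough to replace $5/12$ by $3/8$; the strict inequality $\id(G) < 3n/8$ suggests that the extremal configurations can be excluded, so I would aim to show the bound is never attained with equality. For $\delta(G) = 5$ I would exploit that a vertex of odd degree, in particular every degree-$5$ vertex, has an odd neighbourhood cycle and therefore sees all the colours around it, so it is never the cause of a re-domination step; feeding $\delta \ge 5$ and Euler's identity $\sum_v (6 - \deg v) = 12$ into the discharging should drive the augmentation term to zero and collapse the bound to the Matheson--Tarjan value $n/3$.

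The main obstacle, in all three cases, is controlling the augmentation/re-domination term: one must bound how many vertices lose their unique dominator when $G[D]$ is made independent, and this is where a careful planar discharging or a reducible-configuration analysis is unavoidable. I expect the near-triangulation case to be the hardest, because the non-triangular outer face is precisely where the local structure degrades and where the bound worsens from $3/8$ to $5/12$; pinning down the exact constants $1/12$ and $1/24$, and proving strictness in the planar case, will require the most delicate bookkeeping.
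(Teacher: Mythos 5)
Your proposal is a plan, not a proof: in all three cases the decisive quantitative step---bounding the ``augmentation/re-domination term'' by $n/12$, by $n/24$ with strictness, and by zero respectively---is exactly what you defer to an unspecified discharging or reducible-configuration analysis. That is the entire content of the theorem, and nothing in the proposal makes it plausible that the deferred argument exists. Concretely: (a) when you replace a component $K$ of $G[D]$ by a maximal independent subset $I_K$, the external vertices privately dominated by $K\setminus I_K$ can be numerous, and re-dominating them requires adding new vertices that must be independent of everything already chosen and of each other---this is an instance of the original problem, not a local bookkeeping step, and no mechanism is given that caps its cost at $n/12$; (b) in the $\delta(G)=5$ case, your observation that odd-degree vertices see all four colours is correct, but $\delta(G)=5$ does not make all degrees odd: vertices of degree $6,8,\dots$ may still miss a colour, so the claim that the augmentation term is ``driven to zero'' does not follow; (c) the strict inequality $\id(G)<3n/8$ is asserted to come from ``excluding extremal configurations'' with no candidate argument.

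It is worth noting that the paper obtains all three bounds by staying inside the very framework you set up and then abandoned. Starting from a $4$-colouring $C_1,\dots,C_4$, let $U_i$ be the vertices not dominated by $C_i$. The key structural fact (which your proposal never isolates) is that $N[U_i]\cap U_j=\emptyset$ for $i\ne j$, proved using $2$-connectivity: adjacent vertices in a near triangulation share a neighbour. Hence if $S_i$ is an independent dominating set of $G[U_i]$, then $S=S_1\cup\dots\cup S_4$ is independent in $G$, and each $C_i\cup S_i$ is an independent dominating set. The planar structure then bounds $|S|$ directly: every vertex of $S$ not on the outer boundary lies alone in a face of $H=G-S$ whose boundary uses only two colours and hence has even degree, and Euler's formula (in the form $f_4(H)+2\sum_{i\ge 6}f_i(H)\le |V(H)|-2$) yields $3|S|\le n-2+f_4(H)+|O|$. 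Averaging over the four sets $C_i\cup S_i$ gives $\id(G)\le (n+|S|)/4$ plus boundary corrections, from which $5n/12$, $3n/8$ (using $|O|=3$ and $f_4(H)<n/2$), and $n/3$ (using $f_4(H)=0$ when $\delta(G)=5$, since a vertex of $S$ inside a degree-$4$ face of $H$ would have degree $4$ in $G$) all follow by short calculations. No discharging and no Matheson--Tarjan theorem are needed; the ``augmentation'' you could not control is tamed by parity of face degrees and Euler's formula, not by local surgery.
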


We also show that the first two upper bounds are not less than \(2n/7\), 
by presenting an infinite family of planar triangulations \(G\)
for which  \(\id(G) \geq 2n/7\) (see Theorem~\ref{thm:lowerBoundID}).
Note that this improves an observation of 
Goddard and Henning \cite[Figure~6]{GoddardH2020},
who presented an infinite family of planar triangulations \(G\) for which  \(\id(G) \geq 5n/19\).

As a starting example, we prove that \(\id(G)\leq 2n/5\) 
for every planar triangulation~\(G\) on~\(n\) vertices due to 
a relation between \(r\)-dynamic and acyclic colorings as follows.
A \emph{$k$-coloring} of a graph~$G$ is a partition of~$V(G)$ into~$k$ independent sets. 
Each part in such a partition is called a \emph{color class}.
A coloring of~$G$ is \emph{$r$-dynamic} if each vertex~$v$ has neighbors in at 
least~$\min\{r,d(v)\}$ color classes, where~$d(v)$ denotes the degree of~$v$ in~$G$;
and a coloring of \(G\) is \emph{acyclic} if the union of any two of its color classes 
induces a forest.
We use the following result of Goddard and Henning~\cite[Lemma~4]{GoddardH2020}.

\begin{lemma}[Goddard--Henning, 2020]\label{lem:God}
  For every graph~$G$ on~$n$ vertices with~$\delta(G)\geq r$ 
  for which there is an~$r$-dynamic~$k$-coloring, we have~$\id (G) \leq (k-r)\,n/k$.
\end{lemma}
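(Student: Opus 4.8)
The plan is to prove the bound by exhibiting not one but $k$ independent dominating sets simultaneously and then averaging their sizes, so that the $r$-dynamic hypothesis controls the average. Concretely, I would fix an $r$-dynamic $k$-coloring of $G$ with color classes $V_1,\dots,V_k$. For each color $i$, since $V_i$ is an independent set, I would greedily extend it to a \emph{maximal} independent set $M_i \supseteq V_i$. Being maximal, each $M_i$ is an independent dominating set, so $\id(G) \le |M_i|$ for every $i$; in particular $\id(G) \le \min_i |M_i| \le \frac{1}{k}\sum_{i=1}^{k}|M_i|$. Thus it suffices to show $\sum_{i=1}^{k} |M_i| \le (k-r)n$.

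The key step is a double counting that converts the $r$-dynamic property into an exclusion count. Writing $\sum_{i}|M_i| = \sum_{v\in V(G)} \bigl|\{i : v \in M_i\}\bigr|$, I would bound, for each vertex $v$, the number of sets $M_i$ that contain $v$. The crucial observation is that if $v$ has a neighbor of color $i$, then that neighbor lies in $V_i \subseteq M_i$, and since $M_i$ is independent this forces $v \notin M_i$. Because $\delta(G)\ge r$, the definition of an $r$-dynamic coloring guarantees that $v$ has neighbors in at least $\min\{r,d(v)\} = r$ distinct color classes, so $v$ is excluded from at least $r$ of the sets $M_i$. Hence $v$ belongs to at most $k-r$ of them, and summing over all $v$ gives $\sum_i |M_i| \le (k-r)n$, which finishes the argument.

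The main obstacle is conceptual rather than computational: one must resist the tempting but flawed strategy of selecting a single union of $k-r$ color classes (which the $r$-dynamic condition does show to be a \emph{dominating} set) and then passing to a maximal independent subset of it. That approach breaks because a maximal independent subset of a dominating set need not dominate the vertices that were left out, so it generally fails to be a maximal independent set of $G$. The device of extending \emph{each} color class separately and averaging sidesteps this difficulty entirely: no single $M_i$ needs to be small, and the $r$-dynamic hypothesis enters only through the clean per-vertex exclusion count above. The one point that requires a moment of care is the degree condition $\delta(G)\ge r$, which is exactly what upgrades $\min\{r,d(v)\}$ to $r$ for every vertex and thereby makes the exclusion count uniform.
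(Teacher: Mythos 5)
Your proof is correct. Note that this paper does not actually prove Lemma~\ref{lem:God} at all---it is quoted from Goddard and Henning's paper---so there is no internal proof to compare against; your argument (extend each color class $V_i$ to a maximal independent set $M_i$, observe via the $r$-dynamic hypothesis and $\delta(G)\ge r$ that every vertex is excluded from at least $r$ of the $M_i$, and then bound the minimum by the average) is exactly the standard proof of the cited result, with every step---maximality implying domination, the per-vertex exclusion count, and the final averaging---carried out correctly.
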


Borodin~\cite{Borodin1979} showed that every planar graph admits an acyclic \(5\)-coloring.
Let $G$ be a planar triangulation and \(\chi_a\) be an acyclic \(5\)-coloring of $G$.
If $G$ is a triangle, clearly $\id(G) \leq 2n/5$. 
Otherwise, \(\delta(G)\geq 3\) and the neighborhood of each vertex contains a cycle;
hence, because~\(\chi_a\) is an acyclic coloring, every vertex has neighbors in at least three 
color classes of \(\chi_a\). 
Therefore, \(\chi_a\) is \(3\)-dynamic and, by Lemma~\ref{lem:God}, we have~$\id(G) \leq 2n/5$.

\section{An improved upper bound}

In this section, we prove the main theorem of this paper, Theorem~\ref{thm:mainID}.
For that, we introduce a concept and settle some notation. 
For a vertex~$v$ in \(G\), denote by~$N(v)$ the set of neighbors of~$v$ in~\(G\). 
For a vertex set~$S$ of~\(G\), denote by~$N(S)$ the set of all neighbors of vertices in~$S$
(that may also include vertices  of \(S\)),
and denote by~$N[S]$ the closed neighborhood of~$S$, that is,~$N[S] = N(S) \cup S$.

Recall that a \textit{near triangulation} is a 2-connected planar graph in which any face is bounded by a triangle, except possibly for the outer face,
and note that every planar triangulation is a near triangulation.
We start with some basic properties of near triangulations.
In the next propositions, we consider a near triangulation $G$ on $n\geq 4$ vertices 
with a fixed planar embedding and let $O$ be the boundary of the outer face of $G$.

\begin{proposition}\label{prop:neigh-vertexO}
Let \(u\in V(G)\) and let \(G' = G[N(u)]\).
Then \(G'\) contains a spanning cycle unless \(u\in V(O)\) and \(O\) is not a triangle,
in which case \(G'\) contains a spanning path.
\end{proposition}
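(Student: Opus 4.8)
The plan is to analyze the subgraph $G' = G[N(u)]$ induced by the neighbors of a vertex $u$, distinguishing whether $u$ lies in the interior of $G$ or on the boundary $O$. The key structural fact I would exploit is that in a near triangulation, the faces incident to $u$ (other than possibly the outer face) are triangles, so consecutive neighbors of $u$ in the rotation around $u$ are themselves adjacent. This immediately produces edges in $G'$ between cyclically consecutive neighbors, which is exactly the skeleton of the desired cycle or path.

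First I would treat the interior case. If $u \notin V(O)$, then every face incident to $u$ is a triangle, so all faces around $u$ are bounded by triangles. Listing the neighbors $v_1, v_2, \ldots, v_d$ of $u$ in the cyclic order given by the embedding, each consecutive pair $v_i v_{i+1}$ (indices mod $d$) bounds a triangular face together with $u$, hence $v_i v_{i+1} \in E(G)$. This gives a spanning cycle $v_1 v_2 \cdots v_d v_1$ of $G'$. I would note that $d = |N(u)| \geq 3$ here since $G$ is a near triangulation on at least $4$ vertices and $u$ is interior, so the cycle is genuine. The same argument applies when $u \in V(O)$ but $O$ is a triangle: I would check that in this small/degenerate case the rotation around $u$ still closes up into a cycle, essentially because the outer triangular face also contributes the closing edge.

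Next I would handle the remaining case $u \in V(O)$ with $O$ not a triangle. Now exactly one face incident to $u$ is the outer face, which is not a triangle, while all the others are triangles. Ordering the neighbors $v_1, \ldots, v_d$ so that $v_1$ and $v_d$ are the two neighbors of $u$ along $O$, every consecutive inner pair $v_i v_{i+1}$ for $1 \leq i \leq d-1$ bounds a triangular face with $u$ and hence is an edge, yielding the spanning path $v_1 v_2 \cdots v_d$. The missing edge $v_d v_1$ corresponds precisely to the outer (non-triangular) face, so there is no guarantee it is present, which is why we only claim a path.

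The main obstacle I anticipate is handling degeneracies and making the ordering argument airtight rather than merely intuitive. In particular, I would need to rule out that $u$ has a neighbor appearing twice in the rotation or that the neighborhood fails to be a simple enough structure; this is where $2$-connectedness of $G$ is doing real work, guaranteeing that the outer boundary $O$ is a cycle and that the neighbors of $u$ do not collapse. I would also verify carefully that when $u \in V(O)$, exactly one incident face is the outer face, so that precisely one consecutive pair fails to be forced adjacent; this hinges on $O$ being a cycle through $u$ with two well-defined boundary neighbors. A clean way to formalize the whole argument is to invoke that the link of $u$ (the boundary of the union of faces incident to $u$) is a cycle when $u$ is interior and a path when $u$ is a boundary vertex of a non-triangular outer face, and to observe that this link is a subgraph of $G'$.
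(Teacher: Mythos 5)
Your proof is correct. The paper actually states Proposition~\ref{prop:neigh-vertexO} without proof, as a basic property of near triangulations; your rotation argument (consecutive neighbors of $u$ in the cyclic order around $u$ bound a triangular inner face with $u$ and are hence adjacent, with the outer face accounting for the one possibly missing edge exactly when $u\in V(O)$ and $O$ is not a triangle) is precisely the standard argument the authors implicitly rely on, and your attention to the role of 2-connectedness (face boundaries are cycles, so each face occupies a single angular sector at $u$) together with the standing hypothesis $n\geq 4$ closes the degenerate cases correctly.
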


For the next two propositions, let $S$ be an independent set in $G$,
and put $H=G-S$.
Since the neighborhood of every vertex of \(S\) in \(G\) contains a spanning path,
any path of~\(G\) that joins two vertices of \(H\) can be modified to a path in~$G$ that avoids \(S\).
This yields the following result.

\begin{proposition}\label{prop:H-conn}
$H$ is connected.
\end{proposition}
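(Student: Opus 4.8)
The plan is to exploit connectivity of the ambient graph together with the local structure around each vertex of $S$ supplied by Proposition~\ref{prop:neigh-vertexO}. Since $G$ is $2$-connected, it is in particular connected, so it suffices to show that any two vertices $x,y\in V(H)$ can be joined by a walk that avoids $S$ entirely; such a walk witnesses that $x$ and $y$ lie in the same component of $H$, and since $x,y$ are arbitrary this gives that $H$ is connected. (If $|V(H)|\leq 1$ there is nothing to prove, and $V(H)\neq\emptyset$ because $S$ is independent while $G$, being $2$-connected on $n\geq 4$ vertices, has edges.)

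First I would fix a path $P$ in $G$ from $x$ to $y$, which exists by connectivity, and then reroute $P$ around each vertex of $S$ that it meets. Consider any vertex $v\in S$ lying on $P$, with predecessor $a$ and successor $b$ along $P$. Both $a$ and $b$ are neighbors of $v$, so $a,b\in N(v)$; moreover, since $S$ is independent, no neighbor of $v$ belongs to $S$, i.e.\ $N(v)\cap S=\emptyset$, so $N(v)\subseteq V(H)$. By Proposition~\ref{prop:neigh-vertexO}, $G[N(v)]$ contains a spanning path (a spanning cycle contains one as well), hence $G[N(v)]$ is connected; consequently there is an $a$--$b$ path $Q_v$ all of whose vertices lie in $N(v)\subseteq V(H)$. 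Replacing the subpath $a\,v\,b$ of $P$ by $Q_v$ thus removes $v$ from the route while keeping every new vertex out of $S$. Performing this replacement for every vertex of $S$ on $P$ produces an $x$--$y$ walk contained in $V(H)$, as desired.

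The only genuine point to get right is the interaction between the rerouting and the set $S$: a priori a detour around one vertex of $S$ could pass through another vertex of $S$, which would ruin the argument. This is precisely where independence of $S$ is used, through the observation $N(v)\cap S=\emptyset$, guaranteeing that each detour $Q_v$ stays inside $V(H)$. I would also note that the resulting object is a walk rather than a simple path, which is harmless for establishing connectivity, and that the endpoints $x,y$ are never rerouted away since they already lie in $V(H)$.
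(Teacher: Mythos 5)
Your proposal is correct and follows essentially the same argument as the paper: reroute a path of $G$ around each vertex $v \in S$ it meets, using the spanning path/cycle of $G[N(v)]$ from Proposition~\ref{prop:neigh-vertexO} and the independence of $S$ to keep each detour inside $V(H)$. The paper states this in one sentence; you have merely filled in the details (including the harmless walk-versus-path point) of the same proof.
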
 

Now, let $Y$ be the vertices of $S$ in $O$ and put $X = S \setminus Y$.

\begin{proposition}\label{prop:outsideH}
  Vertices of $Y$ lie in the outer face of $H$. 
  Vertices of $X$ lie in inner faces of $H$.
  Moreover, each inner face of $H$ contains at most one vertex of $X$.
\end{proposition}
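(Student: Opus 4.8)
The plan is to work inside the planar embedding of \(G\), restricted to \(H = G - S\) by erasing the points and open edges belonging to \(S\), and to locate each vertex of \(S\) topologically with respect to the faces of \(H\); these faces are well defined since \(H\) is connected by Proposition~\ref{prop:H-conn}. I would prove the three assertions in turn: first that each \(y \in Y\) sits in the outer face, then the containment part of the \(X\)-assertion, and finally the uniqueness part.

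For a vertex \(y \in Y \subseteq V(O)\), I would use that \(y\) lies on the boundary of the unbounded face of \(G\): there is an arc from \(y\) to infinity whose interior lies in the open outer face of \(G\). Since \(E(H) \subseteq E(G)\), the unbounded face of \(G\) is contained in the unbounded face of \(H\), so this arc avoids \(H\) except at \(y\). As \(y \notin V(H)\) and \(y\) lies on no edge of \(H\) (every edge of \(G\) through \(y\) is incident to \(y \in S\), hence deleted), a small neighborhood of \(y\) meets no vertex or edge of \(H\) and reaches the unbounded region; thus the position of \(y\) lies in the open outer face of \(H\).

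For the \(X\)-assertion I would invoke Proposition~\ref{prop:neigh-vertexO}. A vertex \(x \in X = S \setminus Y\) is not on \(O\), so every face of \(G\) incident to \(x\) is a triangle; concretely, the neighbors of \(x\) taken in cyclic order around \(x\) form a cycle \(C_x\) with consecutive neighbors adjacent, which is the spanning cycle given by Proposition~\ref{prop:neigh-vertexO}. Because \(S\) is independent, \(N(x) \cap S = \emptyset\), so \(C_x \subseteq H\); since \(x\) lies in the bounded region enclosed by \(C_x\), the position of \(x\) lies in an inner face of \(H\). For uniqueness I would use the stronger fact that the closed disk bounded by \(C_x\) in \(G\) is exactly the union of the triangles incident to \(x\): its interior contains \(x\) and the spokes \(x v_i\), but no other vertex or edge of \(G\). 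Deleting \(S\) removes \(x\) together with its incident edges and leaves this interior free of both vertices and edges of \(H\); hence it is a single inner face \(f_x\) of \(H\) containing no vertex of \(X\) besides \(x\). Distinct vertices of \(X\) therefore lie in distinct inner faces, which gives at most one per face.

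The routine part is the topological bookkeeping for \(Y\); the step needing care is the uniqueness claim, namely confirming that the interior of the link cycle \(C_x\) becomes a single face of \(H\) meeting \(X\) only in \(x\). This rests on the fact that, for an interior vertex of a near triangulation, the disk bounded by its link contains no other vertex of \(G\), and it is exactly this property that prevents two vertices of \(X\) from sharing a face.
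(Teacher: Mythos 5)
Your proof is correct. The paper actually states Proposition~\ref{prop:outsideH} without proof, as one of the ``basic properties'' of near triangulations, so there is no written argument to compare against; your reconstruction is the natural intended one, resting exactly on the ingredients the paper sets up: Proposition~\ref{prop:neigh-vertexO} gives the link cycle $C_x$ of an interior vertex $x\in X$, independence of $S$ puts $C_x$ inside $H$, and the key observation that the open disk bounded by $C_x$ contains nothing of $G$ except $x$ and its spokes yields both the containment and the one-vertex-per-face claim, while the outer-face statement for $Y$ follows from the monotonicity of faces under edge deletion.
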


Now, given a plane graph \(H\), we denote by $f_i(H)$ the number of faces of degree $i$ in $H$,
that is, faces with $i$ vertices in their boundary.

\begin{lemma}\label{lemma:faces}
  For every connected plane graph $H$,
  $f_4(H) + 2 \sum_{i\geq 6} f_i(H) \leq |V(H)|-2$.
\end{lemma}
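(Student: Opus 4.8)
The plan is to combine Euler's formula with the face--edge handshake, but to do so carefully, distinguishing the edge-length of a face boundary from its vertex count: since $H$ is only assumed connected (not $2$-connected), a face boundary may be a closed walk rather than a simple cycle. For a face $f$ of $H$, let $\ell(f)$ denote the length of its boundary walk (the number of edge-sides traversed, so a bridge on the boundary is counted twice), and let $v(f)$ denote the number of distinct vertices on that boundary, so that $f$ is tallied by $f_{v(f)}(H)$. Every edge is traversed by exactly two face-sides, hence $\sum_f \ell(f) = 2\,|E(H)|$, and substituting this into Euler's formula $|V(H)| - |E(H)| + F = 2$ (with $F$ the number of faces) to eliminate $|E(H)|$ gives
\[
  \sum_{f} \bigl(\ell(f) - 2\bigr) \;=\; 2\bigl(|V(H)| - 2\bigr).
\]

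Next I would compare the two sides of the target inequality face by face. A closed walk of length $\ell(f)$ visits at most $\ell(f)$ distinct vertices, so $v(f) \le \ell(f)$ for every face. Writing the left-hand side of the lemma as $\sum_f g(v(f))$, where $g(4)=1$, $g(i)=2$ for $i\ge 6$, and $g(i)=0$ otherwise, it then suffices to verify the termwise inequality $2\,g(v(f)) \le \ell(f)-2$ for each face. This follows from $\ell(f)\ge v(f)$ by a short case check: if $v(f)=4$ then $\ell(f)-2 \ge 2 = 2g(4)$; if $v(f)\ge 6$ then $\ell(f)-2 \ge v(f)-2 \ge 4 = 2g(v(f))$; and if $v(f)\notin\{4\}\cup\{i:i\ge 6\}$ then $g(v(f))=0 \le (\ell(f)-2)/2$, using that $\ell(f)\ge 2$ for every face of a connected plane graph with at least one edge. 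Summing over all faces and dividing by $2$ turns the displayed identity into $f_4(H)+2\sum_{i\ge 6}f_i(H) = \sum_f g(v(f)) \le |V(H)|-2$, which is the claim (the degenerate case $|V(H)|\le 1$ being excluded, as the bound is only meaningful once $H$ has an edge).

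The one delicate point, and the place I expect to be the main obstacle, is precisely this $\ell(f)$-versus-$v(f)$ distinction. If one wrongly assumed each face boundary to be a cycle, the identity above would read $\sum_i (i-2)\,f_i(H) = 2(|V(H)|-2)$ — which is exactly the $2$-connected case and already yields the bound via the same termwise comparison — but for general connected $H$ with cut vertices this can fail, and the fix is to run the handshake on the genuine walk-lengths $\ell(f)$ and invoke $v(f)\le\ell(f)$ only at the end. I would also record that the coefficients $1$ and $2$ are best possible: for $H=C_4$ one has $f_4=2=|V(H)|-2$, so the inequality is tight. Thus the difficulty is conceptual rather than computational, lying in applying the Euler count to edge-lengths rather than vertex-counts.
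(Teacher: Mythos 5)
Your proof is correct, and at its core it is the same argument as the paper's: combine Euler's formula with the face--edge handshake, then compare coefficients face by face (the paper zeroes out the $f_3$ and $f_5$ terms and uses $\frac{i-2}{2}\ge 2$ for $i\ge 6$; you phrase the identical step as $2g(v(f)) \le \ell(f)-2$). The genuine difference is exactly the point you flag yourself. The paper runs the handshake directly on the quantities $f_i(H)$, asserting $\sum_{i\ge 3} i\,f_i(H) = 2|E(H)|$, where $f_i(H)$ counts faces by the number of \emph{distinct} boundary vertices; that identity holds only when every face boundary is a cycle, and it fails for connected graphs with cut vertices or bridges (a triangle with a pendant edge has $f_3=f_4=1$, so $\sum_i i\,f_i(H)=7$, while $2|E(H)|=8$). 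Since the lemma is stated for, and applied to, graphs that are merely connected (the graph $H=G-S$ in the main proof need not be $2$-connected), this is a real, if easily repaired, inaccuracy in the paper's own computation: one must either weaken that equality to the inequality $\sum_{i\ge 3} i\,f_i(H) \le 2|E(H)|$ --- which suffices, as every subsequent step goes in the same direction --- or argue as you do, running the handshake on boundary-walk lengths $\ell(f)$, where $\sum_f \ell(f)=2|E(H)|$ is a genuine identity, and invoking $v(f)\le \ell(f)$ only in the termwise step. So your write-up is not just correct but slightly more careful and more general than the published one; your side remarks are also accurate, namely that the inequality is tight for $C_4$ and that the statement degenerates for $|V(H)|=1$ (where the right-hand side is $-1$), so an implicit nondegeneracy assumption, harmless in the application, is needed.
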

\begin{proof}
For every $i$, let $f_i =f_i(H)$.
Note that,~$\sum_{i \geq 3} i\,f_i = 2|E(H)|$ and the number of faces of $H$ is $\sum_{i \geq 3} f_i$. Thus, by Euler's formula (e.g.,~\cite[Theorem~4.2.9]{Diestel2010}), we have 
$$\sum_{i \geq 3} f_i + |V(H)| \ = \ \frac12 \sum_{i \geq 3} i f_i + 2.$$
Hence
\begin{align*}
|V(H)| - 2 & \  =  \ \frac12 \sum_{i \geq 3} i f_i - \sum_{i \geq 3} f_i \\
           & \  =  \ \sum_{i \geq 3} \frac{i-2}2 f_i \\ 
           & \ \geq \ \sum_{i \geq 3} \frac{i-2}2 f_i - \frac12 f_3-\frac32 f_5 \\ 
           & \ \geq \ f_4 + 2 \sum_{i\geq 6} f_i. \qedhere
\end{align*}
\end{proof}

Although not needed here,
we observe that Lemma \ref{lemma:faces} can be strengthen
to the statement $f_4(H) + 2 \sum_{i\geq 6} f_i(H) \leq |V(H)|-2-\frac{f_3(H)+3f_5(H)}{2}$.

\begin{proof}[Proof of Theorem~\ref{thm:mainID}]
  Let~$G$ be a near triangulation on~$n$ vertices.
  The celebrated Four-Color Theorem assures that there exists a 
  4-coloring for~$G$ (e.g.,~\cite[Theorem 5.1.1]{Diestel2010}).
  Let~$C_1, C_2, C_3, C_4$ be the color classes in such a coloring. 
  Each~$C_i$ is an independent set. 
  If some~$C_i$ is empty, then each of the other color classes is non-empty 
  and dominating, because each vertex is in a triangle. 
  Therefore, the smallest of the three non-empty color classes 
  is an independent dominating set of size at most~$n/3 < 3n/8 \leq 5n/12$.

  So suppose each~$C_i$ is non-empty and note that $n \geq 4$.  
  For each~$i$, let~$U_i$ be the set of vertices that are not dominated by~$C_i$.  
  Note that~$U_1$,~$U_2$,~$U_3$, and~$U_4$ are pairwise disjoint, 
  as the neighborhood of any vertex \(u\) is colored with at least two colors, 
  distinct from the color used in \(u\). 
  We start by proving a stronger statement on~$U_1$,~$U_2$,~$U_3$, and~$U_4$, namely that
  \begin{eqnarray}
    N[U_i] \cap U_j & = & \emptyset \mbox{\quad\quad if~$i \neq j$.} \label{eq:NUdisjU}
  \end{eqnarray}
  Indeed, by contradiction, say~$v \in N(U_1) \cap U_2$. 
  Then~$v \in C_3 \cup C_4$.  Let~$u$ be a neighbor of~$v$ in~$U_1$.  
  As~$v \in U_2$, we conclude that~$u \in N(U_2) \cap U_1$, 
  and hence~$u \in C_3 \cup C_4$ also.
  Because~$G$ is a near triangulation, it is 2-connected. As~$u$ and~$v$ are adjacent, 
  $u$ and~$v$ have a common neighbor, say~$w$. 
  Since either~$v \in C_3$ and~$u \in C_4$, or~$v \in C_4$ and~$u \in C_3$, 
  $w$ has either color 1 or 2, contradicting the fact that~$v \in U_2$ and~$u \in U_1$.

  Let~$S_i$ be an independent dominating set of~$G[U_i]$ and let~$S = S_1 \cup S_2 \cup S_3 \cup S_4$. 
  Because each~$S_i$ is a subset of~$U_i$, 
  by~\eqref{eq:NUdisjU}, set~$S$ is independent in~$G$.
  In order to use Propositions~\ref{prop:neigh-vertexO},~\ref{prop:H-conn}, and~\ref{prop:outsideH}, 
  let $O$ be the boundary of the outer face of $G$,
  let $Y$ be the vertices of $S$ in $O$, and let $X = S \setminus Y$. 
  Finally, let~$H = G-S$, $n'=|V(H)|$, and consider the plane embedding of~$H$ 
  induced by $G$, which is connected by Proposition \ref{prop:H-conn}.
  
  \begin{claim}\label{claim:2|X|leqn'-2+f_4}
    $2\,|X| \leq n'-2+f_4(H)$.
  \end{claim}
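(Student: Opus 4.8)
The plan is to map each vertex of $X$ to the face of $H$ containing it and to pin down the degrees of those faces. By Proposition~\ref{prop:outsideH}, every $x \in X$ lies in some inner face $F_x$ of $H$, and distinct vertices of $X$ lie in distinct faces, so $x \mapsto F_x$ is injective. Since $x$ is not on $O$, Proposition~\ref{prop:neigh-vertexO} provides a spanning cycle $C_x$ of $G[N(x)]$, of length $d(x)$. Because $S$ is independent and $x \in S$, no neighbor of $x$ lies in $S$, so $C_x$ survives intact in $H$; moreover the closed disk bounded by $C_x$ is exactly the union of the triangular faces of $G$ incident with $x$, hence contains no vertex other than $x$. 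Consequently $F_x$ is the face of $H$ bounded by $C_x$, and it has degree $d(x)$.

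The crucial step is to show that $d(x)$ is even and at least $4$ for every $x \in X$. As an interior vertex of a near triangulation, $x$ has $d(x) \geq 3$. Write $x \in S_i \subseteq U_i$, so $x$ is not dominated by $C_i$: its own color is some $j \neq i$, and none of its neighbors lies in $C_i$. Each neighbor also avoids color $j$, so every vertex of $C_x$ receives a color in $\{1,2,3,4\}\setminus\{i,j\}$, a set of only two colors. Since consecutive vertices of the cycle $C_x$ are adjacent, this is a proper $2$-coloring of $C_x$, forcing $C_x$ to be an even cycle. Hence $d(x)=|C_x|$ is even, and therefore $d(x)\geq 4$. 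I expect this parity argument to be the heart of the claim and its main difficulty: it is exactly what rules out triangular and pentagonal faces $F_x$, against whose degrees the bound of Lemma~\ref{lemma:faces} carries no slack.

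With this in hand, the count assembles cleanly. For each even $i \geq 4$, let $a_i$ be the number of vertices $x \in X$ for which $F_x$ has degree $i$. Since $x \mapsto F_x$ is injective and each $F_x$ is an inner face, $a_i \leq f_i(H)$, while by the previous step $a_i = 0$ whenever $i = 3$ or $i$ is odd. Therefore
\begin{equation*}
2\,|X| \;=\; 2a_4 + 2\sum_{i \geq 6} a_i \;\leq\; 2 f_4(H) + 2\sum_{i \geq 6} f_i(H).
\end{equation*}
Applying Lemma~\ref{lemma:faces} in the form $2\sum_{i \geq 6} f_i(H) \leq n' - 2 - f_4(H)$ gives $2\,|X| \leq 2 f_4(H) + \bigl(n' - 2 - f_4(H)\bigr) = n' - 2 + f_4(H)$, which is the desired inequality. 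The only place where the specific structure of $S$ (rather than that of an arbitrary independent set) enters is the parity argument of the second paragraph, and indeed the bound is false for general independent sets, so that step cannot be bypassed.
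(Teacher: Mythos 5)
Your proposal is correct and follows essentially the same route as the paper: map each vertex of $X$ injectively to the inner face of $H$ containing it (Proposition~\ref{prop:outsideH}), use membership in some $U_i$ to show that only two colors appear on that face's boundary so its degree is even, and then close with Lemma~\ref{lemma:faces}. Your treatment is in fact somewhat more careful than the paper's, since you explicitly justify via Proposition~\ref{prop:neigh-vertexO} that the face containing $x$ is bounded by the spanning cycle of $G[N(x)]$ --- a step the paper leaves implicit when it speaks of the colors on the face boundary.
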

  \begin{proof}
  By Proposition \ref{prop:outsideH}, any vertex of $X$ is in an inner face of $H$,
  and each such face has at most one vertex of $X$.  
  Now, let \(u \in X\),
  and let \(F\) be the face of \(H\) that contains \(u\).
  Since \(u\in U_i\) for some \(i\), 
  only two colors can appear in the vertices in the boundary of \(F\),
  and hence the number of vertices in the boundary of $F$ is even. 
  Thus,
  $$ 2\,|X| \ \leq \ 2\sum_{i \text{ is even}}f_i(H) \ \leq \ 2f_4(H) + 2\sum_{i\geq 6}f_i(H) 
            \ \leq \ n'-2+f_4(H),$$
  where the last inequality follows from Lemma \ref{lemma:faces}.
  \end{proof}
  
  Now, because $|S|=|X|+|Y|$, by Claim \ref{claim:2|X|leqn'-2+f_4}, we have
  \begin{equation}\label{eq:3|X|+|Y|leqn-2+f_4}
    3|X|+|Y| \ = \ 2|X|+|S| \ \leq \ n'-2+f_4(H)+|S| \ = \ n-2+f_4(H).
  \end{equation}
  
  In what follows, we use $|O|$ to denote the number of vertices in~$O$.
  Because \(Y\) is an independent set and \(O\) is a cycle, we have $|Y| \ \leq \ |O|/2$.
  Thus, from~\eqref{eq:3|X|+|Y|leqn-2+f_4}, we deduce that 
  \begin{equation}
  3|S| \ = \ 3|X| + 3|Y| \ \leq \ n-2+f_4(H)+|O|.
  \end{equation}

  By the definition of \(S_i\), each set~$C_i \cup S_i$ is an independent dominating set. 
  Moreover,
  \[\sum_{i=1}^4 (|C_i| + |S_i|) \ = \ n + |S| \ \leq \ \frac{4n-2+f_4(H)+|O|}3.\]
  Therefore, the smallest of these four independent dominating sets has size 
  at most ${n/3+(f_4(H)+|O|-2)/12}$.
  Let \(f'_4\) be the number of inner faces of degree~4 of \(H\),
  and let~$X_4$ be the set of vertices of $S$ in such inner faces.
  Note that \(f_4(H) \leq f'_4 + 1\) because the outer face may have degree~4.
  Since \(G\) is a near triangulation,
  there is exactly one vertex of \(X_4\) in each such inner face,
  so \(f'_4 = |X_4|\).
  By Proposition~\ref{prop:outsideH}, the vertices in $X_4$ are in $X$, 
  and hence not in~$O$. So \(|X_4| + |O| \leq n\) and 
  $f_4(H)+|O| \leq f'_4 + 1 + |O| = |X_4| + 1 +|O| \leq n + 1$.
  Thus, $\id(G) \leq n/3+(f_4(H)+|O|-2)/12 < 5n/12$.

  Now, if $G$ is a planar triangulation, then $|O| = 3$ and $|Y| \leq 1$.
  Thus, from~\eqref{eq:3|X|+|Y|leqn-2+f_4}, 
  we deduce that ${3|S| \leq 3|X|+|Y|+2 \leq n + f_4(H)}$.
  The number of faces in~$G$ is~$2n-4$, and there are~$4f_4(H)$ faces of~$G$ 
  incident to the vertices of degree~4 in~$S$.  
  Therefore, ${f_4(H) \leq (2n-4)/4 < n/2}$.  Hence $|S| < n/2$. 
  By the same argument as before, we conclude that $\id(G) \leq (n+|S|)/4 < 3n/8$.
  Furthermore, if $G$ has minimum degree five, then $f_4(H) = 0$ and $|S| \leq n/3$.
  Hence $\id(G) \leq (n+|S|)/4 \leq n/3$.
\end{proof}

\section{A lower bound}

As far as we know, Theorem~\ref{thm:mainID} might not be tight: 
we do not know a family of planar triangulations $G$ on $n$ vertices 
with $\id(G)$ approaching $3n/8$.
We improve the previous lower bound on \(\eps\)
given by Goddard and Henning~\cite[Figure 6]{GoddardH2020} in the next result.

\begin{theorem}\label{thm:lowerBoundID}
  There is an infinite family $\calF$ of planar triangulations such that
  ${\id(G) = 2n/7}$ for every $G \in \calF$, where $n$ is the number of vertices in $G$.
\end{theorem}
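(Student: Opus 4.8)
The plan is to exhibit an explicit infinite family $\calF$ of planar triangulations, parametrized by a positive integer $k$, so that each member $G_k$ has $n = 7k$ vertices and satisfies $\id(G_k) = 2n/7 = 2k$. Since the lower-bound argument of Goddard and Henning achieved $5n/19$ by a disjoint-gadget construction, I would follow the same philosophy: design a single \emph{gadget} on $7$ vertices that forces any independent dominating set to use at least $2$ of its vertices, and then assemble $k$ disjoint copies of this gadget into a single planar triangulation by triangulating the outer region so that the copies do not interfere with one another's domination requirements. The natural candidate gadget is a small triangulation on $7$ vertices (for instance a suitably triangulated configuration built around a central structure) whose combinatorics guarantee both a matching upper bound (an independent dominating set of size exactly $2$ exists inside each gadget) and a matching lower bound (no independent dominating set can dominate the gadget's interior using fewer than $2$ gadget vertices).

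The key steps, in order, are as follows. First I would fix the $7$-vertex gadget and verify directly that it is a planar triangulation (every bounded face a triangle) and that it admits an independent dominating set of size $2$; this gives the upper bound $\id(G_k) \leq 2k$ once we confirm that taking two dominating vertices from each of the $k$ copies yields a globally independent dominating set. Second, and this is the crux, I would prove the matching lower bound $\id(G_k) \geq 2k$: I would argue that the gadgets can be glued so that each copy has a set of ``private'' vertices that can only be dominated from within that copy, and then show by a local case analysis that dominating those private vertices while maintaining independence costs at least $2$ vertices per gadget. Summing over the $k$ disjoint copies yields $\id(G_k) \geq 2k = 2n/7$. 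Third, I would confirm that the global graph $G_k$ is indeed a planar triangulation on exactly $7k$ vertices, taking care that the triangulation of the region linking the copies adds no new vertices (so $n = 7k$ exactly) and introduces no edges that would let one gadget's vertices dominate another's private vertices.

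The main obstacle I expect is the lower bound, specifically the gluing: I must ensure that the $k$ gadgets are connected into a genuine triangulation without creating ``shortcut'' adjacencies that would allow a single well-placed vertex to dominate private vertices across two different copies, which could collapse the per-gadget cost below $2$ and ruin the bound. Controlling this typically requires that the vertices used to stitch copies together (and to triangulate the outer face) lie only in the ``public'' part of each gadget, with the private vertices buried in the interior and surrounded entirely by same-gadget neighbors. A secondary subtlety is the independence constraint in the lower-bound case analysis: I need that the two choices of dominator within a gadget are forced to be non-adjacent in every optimal configuration, so that the independence requirement does not secretly allow a cheaper solution elsewhere. Once the gadget's local domination and independence behavior is pinned down by a finite check and the gluing is shown to preserve privacy, the equality $\id(G_k) = 2n/7$ follows by combining the two matching bounds, and letting $k \to \infty$ produces the desired infinite family $\calF$.
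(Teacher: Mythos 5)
Your strategy is exactly the one the paper uses --- a $7$-vertex gadget forcing two vertices of any independent dominating set, with $k$ disjoint copies chained into a planar triangulation on $n=7k$ vertices --- but your proposal never actually produces the gadget, and that construction is the entire mathematical content of the theorem. Saying ``design a gadget whose combinatorics guarantee both a matching upper bound and a matching lower bound'' restates the goal rather than achieving it: an existence theorem proved by explicit construction is only proved once the construction is exhibited and its local properties verified. As written, both of your key steps (the finite check of the gadget and the shortcut-free gluing) are declarations of intent, so there is a genuine gap.

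For comparison, here is what fills that gap in the paper. The gadget is a diamond: two triangles $abc$ and $bcd$ sharing the edge $bc$, with a single vertex $v$ placed inside the triangle $bcd$ and joined to $b,c,d$, and a triangle $t_1t_2t_3$ placed inside the triangle $abc$ and joined so that each $t_i$ is adjacent to exactly two of $a,b,c$. The gadget consists of the seven vertices $b,c,d,v,t_1,t_2,t_3$; the copies are chained circularly by identifying the vertex $a$ of each copy with the vertex $d$ of the previous one, and edges (no new vertices) are added to triangulate the remaining region. The forcing argument is then short and local: since $N(v)=\{b,c,d\}$, any dominating set meets $\{v,b,c,d\}$; and if an independent dominating set avoided $\{t_1,t_2,t_3\}$, it would have to dominate all three from $\{a,b,c\}$, which is impossible because $a,b,c$ are pairwise adjacent (so independence allows at most one of them) while each single one of them is adjacent to only two of the $t_i$. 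These two forced vertices lie in disjoint parts of the gadget, giving $\id(G_k)\ge 2k$; conversely, taking $v$ and one suitable $t_i$ from each copy (interior vertices, hence untouched by the chaining edges) yields an independent dominating set of size $2k$, so $\id(G_k)=2k=2n/7$. Note also that the strict ``privacy'' you insist on is not actually needed: the argument for the $t_i$'s deliberately involves the vertex $a$ belonging to the neighboring copy, and it survives because independence, not privacy, limits how much of $\{a,b,c\}$ can be used.
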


\begin{proof}
Consider the diamond graph depicted in Figure~\ref{fig:27example}(a).
Let us describe a family~$\calF$ of planar triangulations using this graph.
Each planar triangulation in $\calF$ consists of a circular chain of such diamond graphs, 
as depicted in Figure~\ref{fig:27example}(b), with edges added to result in a planar triangulation. 
The planar triangulation $G_k$ obtained in this way with $k$ diamond graphs has $n=7k$ vertices. 
The squared vertices in Figure~\ref{fig:27example}(b) show an independent dominating set with $2k = 2n/7$ vertices.
Note that any independent dominating set in such a planar triangulation $G_k$ must contain at least two vertices in each diamond graph, 
therefore $\id(G_k) = 2k = 2n/7$. 
\end{proof}

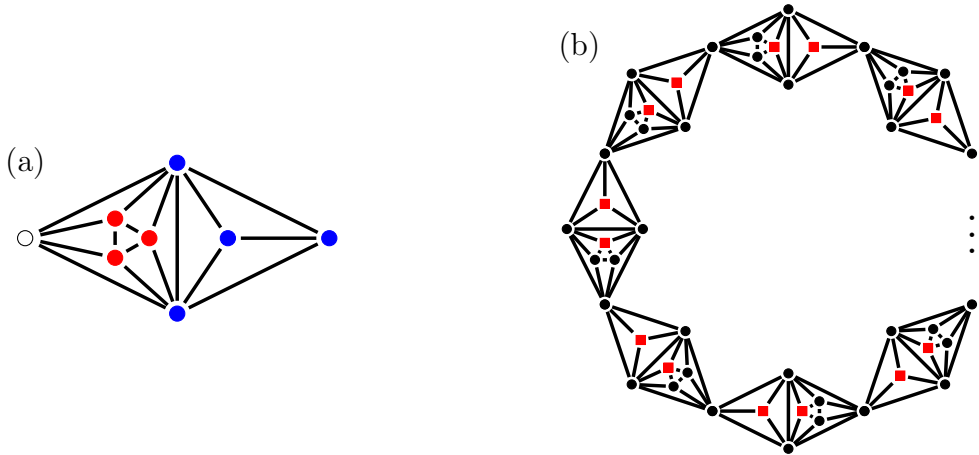
\begin{figure}[h]
\begin{center}
  \parbox{6cm}{
    \begin{tikzpicture}[scale=1]
\node at (0,1) {(a)};
\node (a) [white vertex] at (0,0) {};
\node (b) [blue vertex] at (2,1) {};
\node (c) [blue vertex] at (2,-1) {};
\node (d) [blue vertex] at (4,0) {};

\node (center) [] at ($0.3333*(a) + 0.3333*(b) + 0.3333*(c)$) {};
\node (1) [red vertex] at ($(center) + (120:.3)$) {};
\node (2) [red vertex] at ($(center) + (240:.3)$) {};
\node (3) [red vertex] at ($(center) + (0:.3)$) {};

\node (4) [blue vertex] at ($0.3333*(d) + 0.3333*(b) + 0.3333*(c)$) {};

\draw[line width=1.3pt] (b) -- (c) -- (d) -- (b) -- (a) --(c);
\draw[line width=1.3pt] (a) -- (1) -- (2) -- (a) (b) -- (3) -- (1) --(b) (c) -- (3) -- (2) -- (c);
\draw[line width=1.3pt] (b) -- (4) (c) -- (4) (d) -- (4);
\end{tikzpicture}
  }
  \hspace{1cm}
  \parbox{7cm}{
    \begin{tikzpicture}[scale=.5]
\node at (-3.5,0) {(b)};

\node (ax1) [small black vertex] at (0,0) {};
\node (bx1) [small black vertex] at (2,1) {};
\node (cx1) [small black vertex] at (2,-1) {};
\node (dx1) [small black vertex] at (4,0) {};

\node (centerx1) [] at ($0.3333*(ax1) + 0.3333*(bx1) + 0.3333*(cx1)$) {};
\node (1x1) [small black vertex] at ($(centerx1) + (120:.3)$) {};
\node (2x1) [small black vertex] at ($(centerx1) + (240:.3)$) {};
\node (3x1) [square vertex] at ($(centerx1) + (0:.3)$) {};

\node (4x1) [square vertex] at ($0.3333*(dx1) + 0.3333*(bx1) + 0.3333*(cx1)$) {};

\draw[line width=1.3pt] (bx1) -- (cx1) -- (dx1) -- (bx1) -- (ax1) -- (cx1);
\draw[line width=1.3pt] (ax1) -- (1x1) -- (2x1) -- (ax1) (bx1) -- (3x1) -- (1x1) --(bx1) (cx1) -- (3x1) -- (2x1) -- (cx1);
\draw[line width=1.3pt] (bx1) -- (4x1) (cx1) -- (4x1) (dx1) -- (4x1);

\begin{scope}[shift={(4,0)},rotate=-45]
\node (ax2) [small black vertex] at (0,0) {};
\node (bx2) [small black vertex] at (2,1) {};
\node (cx2) [small black vertex] at (2,-1) {};
\node (dx2) [small black vertex] at (4,0) {};

\node (centerx2) [] at ($0.3333*(ax2) + 0.3333*(bx2) + 0.3333*(cx2)$) {};
\node (1x2) [small black vertex] at ($(centerx2) + (120:.3)$) {};
\node (2x2) [small black vertex] at ($(centerx2) + (240:.3)$) {};
\node (3x2) [square vertex] at ($(centerx2) + (0:.3)$) {};

\node (4x2) [square vertex] at ($0.3333*(dx2) + 0.3333*(bx2) + 0.3333*(cx2)$) {};

\draw[line width=1.3pt] (bx2) -- (cx2) -- (dx2) -- (bx2) -- (ax2) -- (cx2);
\draw[line width=1.3pt] (ax2) -- (1x2) -- (2x2) -- (ax2) (bx2) -- (3x2) -- (1x2) --(bx2) (cx2) -- (3x2) -- (2x2) -- (cx2);
\draw[line width=1.3pt] (bx2) -- (4x2) (cx2) -- (4x2) (dx2) -- (4x2);
\end{scope}

\begin{scope}[shift={(-135:4)},rotate=45]
\node (ax3) [small black vertex] at (0,0) {};
\node (bx3) [small black vertex] at (2,1) {};
\node (cx3) [small black vertex] at (2,-1) {};
\node (dx3) [small black vertex] at (4,0) {};

\node (centerx3) [] at ($0.3333*(ax3) + 0.3333*(bx3) + 0.3333*(cx3)$) {};
\node (1x3) [small black vertex] at ($(centerx3) + (120:.3)$) {};
\node (2x3) [small black vertex] at ($(centerx3) + (240:.3)$) {};
\node (3x3) [square vertex] at ($(centerx3) + (0:.3)$) {};

\node (4x3) [square vertex] at ($0.3333*(dx3) + 0.3333*(bx3) + 0.3333*(cx3)$) {};

\draw[line width=1.3pt] (bx3) -- (cx3) -- (dx3) -- (bx3) -- (ax3) -- (cx3);
\draw[line width=1.3pt] (ax3) -- (1x3) -- (2x3) -- (ax3) (bx3) -- (3x3) -- (1x3) --(bx3) (cx3) -- (3x3) -- (2x3) -- (cx3);
\draw[line width=1.3pt] (bx3) -- (4x3) (cx3) -- (4x3) (dx3) -- (4x3);
\end{scope}

\node (anchor1) [] at ($(-135:4) + (-90:4)$) {};

\begin{scope}[shift={(anchor1)},rotate=90]
\node (ax4) [small black vertex] at (0,0) {};
\node (bx4) [small black vertex] at (2,1) {};
\node (cx4) [small black vertex] at (2,-1) {};
\node (dx4) [small black vertex] at (4,0) {};

\node (centerx4) [] at ($0.3333*(ax4) + 0.3333*(bx4) + 0.3333*(cx4)$) {};
\node (1x4) [small black vertex] at ($(centerx4) + (120:.3)$) {};
\node (2x4) [small black vertex] at ($(centerx4) + (240:.3)$) {};
\node (3x4) [square vertex] at ($(centerx4) + (0:.3)$) {};

\node (4x4) [square vertex] at ($0.3333*(dx4) + 0.3333*(bx4) + 0.3333*(cx4)$) {};

\draw[line width=1.3pt] (bx4) -- (cx4) -- (dx4) -- (bx4) -- (ax4) -- (cx4);
\draw[line width=1.3pt] (ax4) -- (1x4) -- (2x4) -- (ax4) (bx4) -- (3x4) -- (1x4) --(bx4) (cx4) -- (3x4) -- (2x4) -- (cx4);
\draw[line width=1.3pt] (bx4) -- (4x4) (cx4) -- (4x4) (dx4) -- (4x4);
\end{scope}

\node (anchor2) [] at ($(-135:4) + (-90:4) + (-45:4)$) {};

\begin{scope}[shift={(anchor2)},rotate=135]
\node (ax5) [small black vertex] at (0,0) {};
\node (bx5) [small black vertex] at (2,1) {};
\node (cx5) [small black vertex] at (2,-1) {};
\node (dx5) [small black vertex] at (4,0) {};

\node (centerx5) [] at ($0.3333*(ax5) + 0.3333*(bx5) + 0.3333*(cx5)$) {};
\node (1x5) [small black vertex] at ($(centerx5) + (120:.3)$) {};
\node (2x5) [small black vertex] at ($(centerx5) + (240:.3)$) {};
\node (3x5) [square vertex] at ($(centerx5) + (0:.3)$) {};

\node (4x5) [square vertex] at ($0.3333*(dx5) + 0.3333*(bx5) + 0.3333*(cx5)$) {};

\draw[line width=1.3pt] (bx5) -- (cx5) -- (dx5) -- (bx5) -- (ax5) -- (cx5);
\draw[line width=1.3pt] (ax5) -- (1x5) -- (2x5) -- (ax5) (bx5) -- (3x5) -- (1x5) --(bx5) (cx5) -- (3x5) -- (2x5) -- (cx5);
\draw[line width=1.3pt] (bx5) -- (4x5) (cx5) -- (4x5) (dx5) -- (4x5);
\end{scope}

\node (anchor3) [] at ($(ax5) + (4,0)$) {};

\begin{scope}[shift={(anchor3)},rotate=180]
\node (ax6) [small black vertex] at (0,0) {};
\node (bx6) [small black vertex] at (2,1) {};
\node (cx6) [small black vertex] at (2,-1) {};
\node (dx6) [small black vertex] at (4,0) {};

\node (centerx6) [] at ($0.3333*(ax6) + 0.3333*(bx6) + 0.3333*(cx6)$) {};
\node (1x6) [small black vertex] at ($(centerx6) + (120:.3)$) {};
\node (2x6) [small black vertex] at ($(centerx6) + (240:.3)$) {};
\node (3x6) [square vertex] at ($(centerx6) + (0:.3)$) {};

\node (4x6) [square vertex] at ($0.3333*(dx6) + 0.3333*(bx6) + 0.3333*(cx6)$) {};

\draw[line width=1.3pt] (bx6) -- (cx6) -- (dx6) -- (bx6) -- (ax6) -- (cx6);
\draw[line width=1.3pt] (ax6) -- (1x6) -- (2x6) -- (ax6) (bx6) -- (3x6) -- (1x6) --(bx6) (cx6) -- (3x6) -- (2x6) -- (cx6);
\draw[line width=1.3pt] (bx6) -- (4x6) (cx6) -- (4x6) (dx6) -- (4x6);
\end{scope}

\node (anchor4) [] at ($(ax6) + (45:4)$) {};

\begin{scope}[shift={(anchor4)},rotate=225]
\node (ax7) [small black vertex] at (0,0) {};
\node (bx7) [small black vertex] at (2,1) {};
\node (cx7) [small black vertex] at (2,-1) {};
\node (dx7) [small black vertex] at (4,0) {};

\node (centerx7) [] at ($0.3333*(ax7) + 0.3333*(bx7) + 0.3333*(cx7)$) {};
\node (1x7) [small black vertex] at ($(centerx7) + (120:.3)$) {};
\node (2x7) [small black vertex] at ($(centerx7) + (240:.3)$) {};
\node (3x7) [square vertex] at ($(centerx7) + (0:.3)$) {};

\node (4x7) [square vertex] at ($0.3333*(dx7) + 0.3333*(bx7) + 0.3333*(cx7)$) {};

\draw[line width=1.3pt] (bx7) -- (cx7) -- (dx7) -- (bx7) -- (ax7) -- (cx7);
\draw[line width=1.3pt] (ax7) -- (1x7) -- (2x7) -- (ax7) (bx7) -- (3x7) -- (1x7) --(bx7) (cx7) -- (3x7) -- (2x7) -- (cx7);
\draw[line width=1.3pt] (bx7) -- (4x7) (cx7) -- (4x7) (dx7) -- (4x7);
\end{scope}

\node (anchor5) [] at ($0.55*(dx2) + 0.45*(ax7)$) {\Large$\vdots$};

\end{tikzpicture}
  }
\end{center}
\caption{(a) A gadget consisting of seven vertices: the white vertex is not part of the gadget.  
Any independent dominating set has one of the red vertices, otherwise, being independent, 
it cannot dominate the three red vertices. 
Analogously, any independent dominating set has one of the blue vertices,
otherwise it does not dominate the middle blue vertex. 
(b) A graph consisting of a circular chain of gadgets. In each copy of the gadget, two of its vertices 
are needed in any independent dominating set.  The red squared vertices form an independent set with exactly two vertices in each gadget.}
\label{fig:27example}
\end{figure}

\section{Further results and concluding remarks}

In this section, we explore a few families of planar triangulations
for which  we can obtain better bounds on their independent domination number.
A planar \(3\)-tree is a planar triangulation that can be obtained from a triangle
by repeatedly choosing one of its faces and adding a new vertex inside of it while joining this new vertex to
the three vertices of the face.
It is not hard to prove that any 
planar \(3\)-tree admits 
a \(4\)-coloring in which each of its color classes
is dominating. Thus \(\id(G)\leq n/4\) for every planar \(3\)-tree on~\(n\) vertices.

As we observed before, Conjecture \ref{conj:planartr} holds for any Eulerian planar triangulation. 
We can prove a better bound for a particular class of Eulerian triangulations,
which we call \textit{recursive Eulerian triangulations}, and define as follows.
A recursive Eulerian triangulation is either a triangle,
or a graph obtained from a recursive Eulerian triangulation by selecting one of its faces, drawing a triangle inside of it, and joining both ends of each edge of the new triangle with a different vertex of the selected face,
so that the subgraph induced by the chosen face and the new triangle is isomorphic to the octahedron.
It is not hard to check that recursive Eulerian triangulations have the following property.

\begin{proposition}\label{prop:V4triangles}
  In a recursive Eulerian triangulation of order at least 9, 
  the {degree-4} vertices induce a collection of vertex-disjoint triangles.
\end{proposition}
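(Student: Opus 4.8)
The plan is to argue by induction on the order $n$ of the recursive Eulerian triangulation $T$, equivalently on the number of octahedron operations used to build it. The base case is $n=9$, where $T$ is obtained from the octahedron by one further operation; here I would simply list the degree-4 vertices and check directly that they split into two vertex-disjoint triangles, namely the inner triangle created by the last operation together with the unique octahedron face disjoint from the operated face. For the inductive step I would write $T$ as obtained from a recursive Eulerian triangulation $T'$ of order $n-3\ge 9$ by choosing a face $F$ with vertices $a,b,c$, drawing a new triangle on vertices $x,y,z$ inside $F$, and adding the six edges that make $F\cup\{x,y,z\}$ an octahedron.

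The first step is a degree count. Each of $x,y,z$ is adjacent to exactly two of $a,b,c$ and to the other two new vertices, so all three have degree $4$ in $T$ and no neighbour outside $\{a,b,c,x,y,z\}$; and, being an Eulerian triangulation other than a single triangle, $T'$ has minimum degree at least $4$, so $a,b,c$ acquire degree at least $6$ and drop out of the degree-4 set. Every other vertex keeps its $T'$-degree. Hence the degree-4 vertices of $T$ are exactly $\{x,y,z\}$ together with the degree-4 vertices of $T'$ other than $a,b,c$, and the adjacencies among the latter are inherited unchanged from $T'$. Since $x,y,z$ form a triangle with no $T$-neighbour among the other degree-4 vertices, this triangle is one component of the degree-4 subgraph, and it remains to understand the effect of deleting $\{a,b,c\}$ on the degree-4 triangles of $T'$.

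By the induction hypothesis the degree-4 vertices of $T'$ induce vertex-disjoint triangles. Removing $\{a,b,c\}$ and inserting $\{x,y,z\}$ keeps the conclusion intact provided $\{a,b,c\}$ meets each degree-4 triangle of $T'$ in either none or all three of its vertices: in the former case the triangle survives, in the latter it is entirely erased. I would therefore carry a strengthened invariant through the induction, namely that in every recursive Eulerian triangulation of order at least $9$ each face either contains no degree-4 vertex or has its three vertices equal to one of the degree-4 triangles. Because the operated face $F$ is a face of $T'$, this invariant applied to $T'$ delivers exactly the dichotomy needed, and one then checks that the invariant is restored in $T$ after the operation.

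The main obstacle is establishing this invariant, and it is genuinely delicate: it requires ruling out every face that carries one or two vertices of some degree-4 triangle but not the third. This is the crux, since operating on such a face would raise the degrees of those one or two vertices while leaving the remaining vertex of the triangle as a degree-4 vertex with no degree-4 neighbour, destroying the conclusion. I would spend the bulk of the work tracking, for each operation, which faces become incident to the newly created degree-4 triangle and verifying that no face meets a degree-4 triangle in exactly one or two vertices; should this invariant fail for some admissible choice of operated face, that failure would pinpoint exactly where the hypothesis \emph{order at least $9$} needs to be reinforced, and it is the step where I expect the argument to demand the most care.
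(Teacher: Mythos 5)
You correctly set up the induction (the base case at order $9$ and the degree count in the inductive step are both fine), and you correctly locate the difficulty; but your proposal stops exactly there, so it has a genuine gap: the strengthened invariant --- every face meets the degree-4 set in none or all three of its vertices --- is never established. In fact it cannot be established, because it already fails at order $9$. Build the order-$9$ graph from the octahedron with outer triangle $a_1b_1c_1$ and inner triangle $a_2b_2c_2$ by operating on the face $a_2b_2c_2$, creating $p,q,r$ with $p\sim a_2,b_2$, $q\sim b_2,c_2$, $r\sim c_2,a_2$. Then $rpa_2$ (and likewise $pqb_2$ and $qrc_2$) is a face containing exactly two degree-4 vertices, namely $p$ and $r$, together with the degree-6 vertex $a_2$.

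Worse, this is not a defect of your strategy that a cleverer invariant could repair: the proposition itself is false under the paper's stated definition. Operating on the face $rpa_2$ of the order-$9$ graph above is a legitimate step of the recursive construction (the six relevant vertices do induce an octahedron), and in the resulting order-$12$ recursive Eulerian triangulation the vertices $p,r,a_2$ acquire degrees $6,6,8$, the three new vertices form a degree-4 triangle, and $q$ keeps degree $4$ while all four of its neighbours $b_2,c_2,p,r$ now have degree $6$. Hence the degree-4 vertices induce two triangles plus the isolated vertex $q$, which is not a disjoint union of triangles; one further operation, on a face containing exactly one of the newest vertices, similarly produces an induced $K_2$. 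Note that the paper offers no proof to compare against --- the proposition is prefaced only by ``it is not hard to check'' --- and your attempted induction in effect uncovers why no proof exists: the statement needs an additional restriction, for instance that one may only operate on faces whose three vertices all have degree $4$ (equivalently, on the current degree-4 triangles), under which your inductive dichotomy holds trivially and the argument closes. As stated, what you were asked to prove is false, so the gap in your proposal cannot be filled.
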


Now, given a graph~$G$, we denote by~$\chi_r(G)$ the minimum 
number of colors in an~$r$-dynamic coloring of~$G$.
Also, fixed a coloring of \(G\), 
we denote by \(L_G(u)\) the set of colors that \(u\) does not see,
i.e., that do not appear in the closed neighborhood of \(u\).

\begin{lemma}\label{lemma:rec-eul-5dyn-6col}
	Let \(G\) be a recursive Eulerian triangulation.
  There is a 5-dynamic 6-coloring of \(G\) such that,
  if \(u\) and \(v\) are adjacent degree-\(4\) vertices, 
  then \(L_G(u) \neq L_G(v)\).
In particular~$\chi_5(G)\leq 6$.
\end{lemma}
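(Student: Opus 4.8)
The plan is to prove, by induction following the recursive construction of \(G\), a slightly strengthened statement. Call a \(6\)-coloring \(c\) of a recursive Eulerian triangulation \(H\) \emph{good} if it is proper, \(5\)-dynamic, and satisfies \(L_H(u)\neq L_H(v)\) for every pair of adjacent degree-\(4\) vertices \(u,v\). The first observation is local: since \(H\) is Eulerian, every vertex has even degree at least \(4\), and if \(v\) has degree exactly \(4\) then \(5\)-dynamicity forces its four neighbors to receive four distinct colors; hence \(N[v]\) uses exactly five colors and \(\lvert L_H(v)\rvert=1\). Thus in a good coloring the condition \(L_H(u)\neq L_H(v)\) compares two single colors. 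The base cases are the triangle (no degree-\(4\) vertex, so any proper \(3\)-coloring is good) and the octahedron of order \(6\), where I use the coloring assigning six distinct colors to the six vertices: every vertex then has degree \(4\) and misses exactly the color of its antipodal vertex, and since antipodal vertices get distinct colors, adjacent vertices miss distinct colors, while \(5\)-dynamicity is immediate.

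For the inductive step, let \(G\) have order \(n\geq 9\), so \(G\) arises from a recursive Eulerian triangulation \(G'\) of order \(n-3\geq 6\) by choosing a face \(F=\{v_1,v_2,v_3\}\), inserting a triangle \(\{w_1,w_2,w_3\}\), and adding six edges so that \(F\cup\{w_1,w_2,w_3\}\) induces an octahedron; label so that \(w_i\) is antipodal to \(v_i\), i.e.\ \(v_i\) is adjacent to the two vertices of \(\{w_1,w_2,w_3\}\) other than \(w_i\). Since \(G'\) has order at least \(6\), all its vertices have degree at least \(4\), so after insertion each \(v_i\) has degree at least \(6\). Starting from a good coloring \(c'\) of \(G'\) supplied by induction, I extend it by coloring \(w_1,w_2,w_3\) with the three colors \(R\) not used on \(F\) (these exist because \(v_1,v_2,v_3\) form a triangle using three colors). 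The only freedom, and the crucial one, is the bijection between \(\{w_1,w_2,w_3\}\) and \(R\).

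The heart of the argument is choosing that bijection to restore \(5\)-dynamicity at the promoted vertices. Vertices outside \(F\) are untouched, and a vertex of degree at least \(6\) that already saw five colors still does, since its neighborhood only grows; so the only vertices whose \(5\)-dynamic condition could fail are those \(v_i\) that had degree \(4\) in \(G'\) and now have degree \(6\). For such a \(v_i\) let \(\lambda_i=L_{G'}(v_i)\) be its unique missing color; since \(v_i\) is adjacent in \(G'\) to \(v_j,v_k\) and to its own color, \(\lambda_i\notin\{c'(v_1),c'(v_2),c'(v_3)\}\), so \(\lambda_i\in R\). The two new neighbors of \(v_i\) are exactly the vertices of \(\{w_1,w_2,w_3\}\) other than \(w_i\), hence \(v_i\) sees \(\lambda_i\) after the step if and only if \(c(w_i)\neq\lambda_i\). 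So I need a bijection \(\{w_1,w_2,w_3\}\to R\) avoiding \(w_i\mapsto\lambda_i\) for every degree-\(4\) vertex \(v_i\) of \(F\). When all three \(v_i\) have degree \(4\), invariant \(L\)-distinctness forces \(\lambda_1,\lambda_2,\lambda_3\) pairwise distinct, so \(\{\lambda_1,\lambda_2,\lambda_3\}=R\) and the forbidden map is a bijection avoided by either derangement of a \(3\)-set; when fewer \(v_i\) have degree \(4\) the constraints form a strict subset of these and a valid bijection still exists. Any such choice makes the coloring \(5\)-dynamic.

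Finally I verify the extension is good. Properness is clear. A direct computation gives \(N[w_i]=\{w_i,w_j,w_k,v_j,v_k\}\) with color set \(R\cup\{c'(v_j),c'(v_k)\}\), all colors except \(c'(v_i)\), so \(L_G(w_i)=\{c'(v_i)\}\); hence \(w_1,w_2,w_3\) have pairwise distinct missing colors, and their only neighbors outside \(\{w_1,w_2,w_3\}\) are \(v_1,v_2,v_3\), now of degree \(6\) and thus irrelevant to the degree-\(4\) condition. Every other adjacency between degree-\(4\) vertices already occurred in \(G'\) between vertices whose neighborhoods, and so missing colors, are unchanged, so this condition is inherited from \(c'\). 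This closes the induction and yields a good, in particular \(5\)-dynamic, \(6\)-coloring of \(G\), whence \(\chi_5(G)\leq 6\). The main obstacle is the derangement step: one must check that the \(5\)-dynamic constraints at the \emph{simultaneously} promoted degree-\(4\) vertices of a single face are jointly satisfiable, and this is exactly where the strengthened hypothesis that adjacent degree-\(4\) vertices have distinct \(L\) is needed, to guarantee the relevant missing colors are distinct.
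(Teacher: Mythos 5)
Your proof is correct and follows essentially the same route as the paper: induction on the recursive construction, removing the last-added triangle and coloring it with the three colors absent from the chosen face so that each new vertex avoids the missing color of its antipodal face vertex, with the strengthened $L$-distinctness hypothesis guaranteeing this is possible. Your explicit derangement argument is just a more detailed rendering of the step the paper handles by the ``we may assume $L'(x)\subseteq\{4\}$, $L'(y)\subseteq\{5\}$, $L'(z)\subseteq\{6\}$'' color renaming.
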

\begin{proof}


Let~$G$ be a recursive Eulerian triangulation on~$n$ vertices. We proceed by induction on~$n$.
If~$n\leq 6$ then~$G$ is a triangle or the octahedron and the result is immediate.
So, suppose that~$n\geq 9$.
Let~$abc$ be the last triangle added in the recursive construction of~$G$. 

Let~$G'=G-abc$, and let~$xyz$ be the face of~$G'$ where~$abc$ lies. 
We may assume that~$\{a,x\}, \{b,y\}$ and
$\{c,z\}$ are independent sets.
By induction hypothesis, there exists a 5-dynamic 6-coloring~$\chi' = \{C'_1,C'_2,C'_3,C'_4,C'_5,C'_6\}$ of~$G'$ as above.
For every~$u$, set~$L'(u):=L_{G'}(u)$,~$L(u):=L_G(u)$,~$d'(u):=d_{G'}(u)$ and~$d(u)=d_G(u)$.
Note that~$|L'(u)|=1$ if~$d'(u)=4$ and~$|L'(u)|=0$ if~$d'(u)\geq 6$.
Also, as~$G'$ has at least six vertices, every vertex in~$G'$ has degree at least 4.

Without loss of generality, assume that~$x \in C'_1$,~$y \in C'_2$, and~$z \in C'_3$.
We may also assume that~$L'(x)\subseteq \{4\}$,
$L'(y)\subseteq\{5\}$, and $L'(z)\subseteq\{6\}$.
We define a new coloring~$\chi$ by maintaining the same colors to the vertices in~$G'$ and
assigning color 5 to~$a$, color~6 to~$b$, and color~$4$ to~$c$.
Note that~$\chi$ is in fact a proper coloring for~$G$. 
We claim that~$\chi$ is a coloring as desired.
Indeed, note that~$L(x)=L'(x) \setminus \{4,6\} = \emptyset$.
A similar argument holds for~$y$ and~$z$.
Thus every vertex in~$G$ of degree at least 6 sees all colors.
Also,~$L(a)=\{1\}, L(b)=\{2\}$, and~$L(c)=\{3\}$.
\end{proof}

\begin{theorem}
  For every recursive Eulerian triangulation~$G$ on~$n \geq 9$ vertices, 
  we have~$\id(G) < 13n/42$.
\end{theorem}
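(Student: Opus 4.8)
The plan is to combine the colouring of Lemma~\ref{lemma:rec-eul-5dyn-6col} with a separate treatment of the degree-\(4\) vertices, since a recursive Eulerian triangulation has minimum degree only \(4\) and hence Lemma~\ref{lem:God} cannot be applied directly with \(r=5\) (that would spuriously give \(n/6\)). Fix a \(5\)-dynamic \(6\)-colouring \(C_1,\dots,C_6\) of \(G\) as provided by Lemma~\ref{lemma:rec-eul-5dyn-6col}. Because \(G\) is Eulerian, every degree is even and at least \(4\); the \(5\)-dynamic property then forces each degree-\(4\) vertex \(v\) to see all four of its neighbours' colours, so \(|L_G(v)|=1\), while every vertex of degree at least \(6\) sees all six colours, so \(L_G(v)=\emptyset\). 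Thus the only vertices missing a colour are the degree-\(4\) vertices, each missing exactly one.

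For each colour \(j\), let \(W_j\) be the set of degree-\(4\) vertices \(v\) with \(L_G(v)=\{j\}\); these sets partition the set of degree-\(4\) vertices, whose number I denote by \(n_4\). I claim that \(C_j\cup W_j\) is an independent dominating set. No vertex of \(W_j\) sees colour \(j\), so \(W_j\) has no neighbour in \(C_j\); and the extra property in Lemma~\ref{lemma:rec-eul-5dyn-6col}---that adjacent degree-\(4\) vertices have distinct missing sets---makes \(W_j\) independent, so \(C_j\cup W_j\) is independent. For domination, \(C_j\) dominates every vertex except those missing colour \(j\), namely the vertices of \(W_j\), each of which lies in the set. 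Summing sizes over all six colours gives \(\sum_{j=1}^6(|C_j|+|W_j|)=n+n_4\), so the smallest of these six independent dominating sets has size at most \((n+n_4)/6\), whence \(\id(G)\le (n+n_4)/6\).

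It remains to bound \(n_4\), and here I would use Proposition~\ref{prop:V4triangles}: the degree-\(4\) vertices induce \(t\) vertex-disjoint triangles, so \(n_4=3t\). Deleting these \(t\) triangles yields a simple connected plane graph \(H=G-A\) on \(m:=n-n_4\) vertices, and each deleted triangle occupied a distinct face of \(H\) (the triangular face of the octahedron gadget into which it had been inserted). Since a simple connected plane graph on \(m\ge 3\) vertices has at most \(2m-4\) faces, we get \(t\le 2(n-n_4)-4\); substituting \(n_4=3t\) gives \(7t\le 2n-4\), so \(n_4\le (6n-12)/7\). Plugging this into the previous estimate yields
\[
  \id(G)\ \le\ \tfrac{1}{6}\bigl(n+\tfrac{6n-12}{7}\bigr)\ =\ \frac{13n-12}{42}\ <\ \frac{13n}{42},
\]
as required; note this also explains the exact constant \(13/42\), arising from \(n_4\) being of order \(6n/7\).

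The step I expect to be the main obstacle is the geometric claim that deleting the degree-\(4\) triangles leaves each of them sitting inside its \emph{own} face of \(H\), i.e.\ that distinct octahedron gadgets are never placed in a common face. This is intuitively forced by the recursive construction together with the vertex-disjointness of Proposition~\ref{prop:V4triangles}, but unlike the rest of the argument it requires a construction-based verification rather than pure counting, and it is the one place I would write out carefully.
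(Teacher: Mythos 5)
Your proposal is correct and follows essentially the same route as the paper: your sets $W_j$ coincide with the paper's sets $U_i$ of vertices not dominated by $C_i$ (both are exactly the degree-$4$ vertices missing colour $i$), giving six independent dominating sets of total size $n+|V_4|$, and your face-counting bound $7|V_4|\leq 6n-12$ after deleting the degree-$4$ vertices is precisely the paper's Claim on $V_4$. Even the geometric point you flag as delicate---that distinct degree-$4$ triangles lie in distinct faces of $G-V_4$---is treated just as tersely in the paper, where it is attributed to Proposition~\ref{prop:V4triangles}.
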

\begin{proof}
  Let~$V_4$ be the set of degree-4 vertices of~$G$.

  \begin{claim}\label{claim:6id(G)leqn+V_4}
    $6\,\id(G) \leq n+|V_4|$.
  \end{claim}
  \begin{proof}
    Let~$\chi = \{C_1,C_2,C_3,C_4,C_5,C_6\}$ be a 5-dynamic 6-coloring of~$G$
    as in~Lemma~\ref{lemma:rec-eul-5dyn-6col}. 
    For each~$i$, let~$U_i$ be the set of vertices that are not dominated by~$C_i$. 
    Observe that all vertices with degree at least 5 are dominated by every color of~$\chi$.
    Hence, $\cup_{i=1}^6 U_i$ is precisely the set of degree-4 vertices.
    Moreover, a degree-4 vertex has exactly four different colors on its neighborhood. 
    Thus, the set~$\{U_1,U_2,U_3,U_4,U_5,U_6\}$ partitions~$V_4$.
    Also, by the property of $\chi$ stated in Lemma~\ref{lemma:rec-eul-5dyn-6col}, 
    two adjacent degree-4 vertices are not in the same set $U_i$. 
    Hence, for every $i$, the set~$C_i \cup U_i$ is independent. 
    Therefore, ${6\,\id(G) \leq \sum_{i}|C_i| + \sum_{i}|U_i| = n+|V_4|}$.
  \end{proof}
  
  \begin{claim}\label{claim:V_4leq6n-127}
    $7\,|V_4| \leq 6n-12$.
  \end{claim}
  \begin{proof}
    Let $G'=G-V_4$.
    As $G'$ is a plane graph, the number $f'$ of faces of $G'$ is at most $2|V(G')|-4 = 2(n-|V_4|)-4$.
    By Proposition~\ref{prop:V4triangles}, if there are vertices of $V_4$ inside a face of~$G'$, then 
    there are exactly three such vertices. 
    Hence, $|V_4| \leq 3f' \leq 3(2n-2|V_4|-4)$ and the proof follows.
  \end{proof}
  
  By Claims~\ref{claim:6id(G)leqn+V_4} and~\ref{claim:V_4leq6n-127}, 
  we deduce that $\id(G) \leq \frac{13n-12}{42}$.
\end{proof}


We conclude by observing that, 
if every vertex of a planar triangulation~$G$ on $n$ vertices has odd degree, 
then every color class of a 4-coloring of $G$ is dominating, so $\id(G)\leq n/4$. 
We can extend this result and show that if $G$ has at least $\alpha n$ odd-degree vertices,
then $\id(G)\leq (2-\alpha)n/4$, which improves the bound in Theorem~\ref{thm:mainID} when $\alpha \geq 2/7$.
Also, Conjecture~\ref{conj:planartr} holds for any \(n\)-vertex planar triangulation 
with at least \(2n/3\) odd-degree vertices. 




\bibliographystyle{plain}
\bibliography{domset.bib}

\end{document}